\crefname{hypothesis}{Hypothesis}{Hypotheses}
\title{Speeding up Krylov subspace methods 
for computing {$\lowercase{f}(A)\lowercase{b}$} via randomization\thanks{Submitted to the editors DATE.
\funding{The work of AC has been partially supported by the SNSF research project \emph{Fast algorithms from low-rank updates}, grant number: 200020\_178806.}}}
\author{Alice Cortinovis\thanks{Department of Mathematics, Stanford University, Stanford, CA 94305, USA (\email{alicecor@stanford.edu}). } \and Daniel Kressner\thanks{Institute of Mathematics, EPFL, CH-1015 Lausanne, Switzerland (\email{daniel.kressner@epfl.ch}).} \and Yuji Nakatsukasa\thanks{Mathematical Institute, University of Oxford, Oxford, OX2 6GG, UK (\email{nakatsukasa@maths.ox.ac.uk}).}}
	\newcommand{\R}{\mathbb {R}}
	\newcommand{\C}{\mathbb {C}}
    \newcommand{\rev}[1]{\textcolor{black}{#1}}
\begin{document}
	\nolinenumbers

\maketitle

\begin{abstract}
This work is concerned with the computation of the action of a matrix function $f(A)$, such as the matrix exponential or the matrix square root, on a vector $b$. For a general matrix $A$, this can be done by computing the compression of $A$ onto a suitable Krylov subspace. Such compression is usually computed by forming an orthonormal basis of the Krylov subspace using the Arnoldi method. In this work, we propose to compute (non-orthonormal) bases in a faster way and to use a fast randomized algorithm for least-squares problems to compute the compression of $A$ onto the Krylov subspace. We present some numerical examples which show that our algorithms can be faster than the standard Arnoldi method while achieving comparable accuracy.
\end{abstract}

\begin{keywords}
Matrix functions, Krylov subspace method, sketching, nonorthonormal basis, randomized algorithms, least-squares problem
\end{keywords}

\begin{MSCcodes}
65F60, 65F50, 68W20
\end{MSCcodes}

\section{Introduction}

Given a matrix $A \in \R^{n \times n}$ and a function $f$ which is analytic on the closure of a domain $\mathbb{D} \subset \C$ that contains the eigenvalues of $A$, the matrix function $f(A)$ is defined as
\begin{equation*}
	f(A) := \frac{1}{2\pi i} \int_{\partial \mathbb{D}} f(z) (zI - A)^{-1} \mathrm{d} z,
\end{equation*}
where $I$ denotes the identity matrix; see the book~\cite{Higham2008} for equivalent definitions of matrix functions. 

Matrix functions arise, for instance, in the solution of partial differential equations~\cite{Botchev2020,Ciegis2018}, in data analysis~\cite{Benzi2020, Estrada2010}, and in electronic structure calculations~\cite{Benzi2013}. In many of these applications, one is only interested in the product of $f(A)$ with a vector $b \in \mathbb{R}^n$. If $A$ is large, computing the full $f(A)$ by standard methods (e.g. Schur-Parlett algorithm~\cite{Davies2003}) is infeasible in practice. Moreover, the matrix $A$ may itself be too large to store explicitly. When $A$ is only available through matrix-vector multiplications, a Krylov subspace method is \rev{often} the method of choice to approximate $f(A)b$ (see, e.g.,~\cite[Chapter 13.2]{Higham2008}); we review this approach in Section~\ref{sec:functions}.

A key ingredient of Krylov subspace methods is the \rev{construction} of an orthonormal basis of the Krylov subspace
\begin{equation} \label{eq:krylovspace}
    \mathcal{K}_m(A,b) = \mathrm{span}\{b, Ab, A^2b, \ldots, A^{m-1}b\}
\end{equation}
for some $m \in \mathbb{N}$ ($m \ll n$), using the Arnoldi algorithm (see, e.g.,~\cite[Section 6.2]{Saad1992}). However, the cost of orthonormalization grows quadratically with $m$, the number of iterations. 
Many approaches have been proposed to deal with this problem, including explicit polynomial approximation, restarting procedures, and nonsymmetric Lanczos methods; see~\cite{Guettel2020} for a recent survey on this topic.

In this paper, we exploit randomization to speed up Krylov subspace methods for computing $f(A)b$. This extends the case of linear systems and eigenvalue problems, which was considered by Balabanov and Grigori in~\cite{Balabanov2020} and by Nakatsukasa and Tropp in~\cite{Nakatsukasa2021}. The recent work by G{\"u}ttel and Schweitzer~\cite{Guettel2022} also addresses the computation of $f(A)b$; we will comment in more detail in Section~\ref{sec:guettelcompare} on its relation to our work.

The key idea of our approach is that there is no need to maintain an orthonormal basis of~\eqref{eq:krylovspace} in order to use Krylov subspace methods, as long as one can efficiently compute the compression of $A$ onto the Krylov subspace. On the one hand, we can exploit faster ways to construct a basis of $\mathcal{K}_m(A,b)$ (as discussed in~\cite{Balabanov2020,Nakatsukasa2021} and in Section~\ref{sec:krylovbasis}), on the other hand we can leverage randomized techniques (such as Blendenpik~\cite{Blendenpik}) in order to compute the compression quickly. We also propose an algorithm that partly bypasses the computation of the compression and is, in turn, even faster -- but comes with weaker convergence guarantees.

The rest of this work is organized as follows. In Section~\ref{sec:krylovbasis} we review and discuss two algorithms from~\cite{Balabanov2020,Nakatsukasa2021} to construct nonorthonormal bases of $\mathcal{K}_m(A,b)$. In Section~\ref{sec:functions} we recall the standard Krylov subspace method and we present our proposed algorithms. In Section~\ref{sec:convergence} we present some convergence properties of our methods and discuss their numerical stability. Section~\ref{sec:numexp} contains several numerical examples that illustrate the convergence and timings of the proposed algorithms and a comparison with an algorithm from~\cite{Guettel2022}. Conclusions are summarized in Section~\ref{sec:conclusion}.
\rev{While our algorithms and results are formulated for real matrices only, most of the discussion carries directly over to complex matrices, under a suitable choice of sketching matrix.}

\section{Krylov subspaces and Krylov bases}\label{sec:krylovbasis}

In this section, we discuss the computation of an orthonormal basis of $\mathcal{K}_m(A,b)$ \rev{defined in~\eqref{eq:krylovspace}}, the $m$th Krylov subspace associated to a matrix $A \in \R^{n \times n}$ and the vector $b \in \R^n$.
To simplify the discussion, we will assume that $\mathcal{K}_m(A,b)$ is an $m$-dimensional subspace of $\R^n$, which holds whenever $m < n$ and $b$ is not contained in a (nontrivial) invariant subspace of $A$.
An orthonormal basis $U_m = \begin{bmatrix} u_1 & \cdots & u_m\end{bmatrix}$ of $\mathcal{K}_m(A,b)$ can be computed using the Arnoldi method (see, e.g.,~\cite[Section 6.2]{Saad1992}), together with an upper Hessenberg matrix $K_m \in \R^{m \times m}$ and a unit vector $u_{m+1}$ such that
\begin{equation}\label{eq:arnoldi}
	A U_m = U_m K_m + k_{m+1,m}u_{m+1} e_m^T = U_{m+1} \underline{K}_m, \qquad \underline{K}_m = \begin{bmatrix} K_m \\ k_{m+1,m} e_m^T \end{bmatrix} \in \R^{(m+1) \times m},
\end{equation}
where $e_m$ is the $m$th vector of the canonical basis of $\R^m$.
The relation~\eqref{eq:arnoldi} is called the \emph{Arnoldi decomposition}. Note that we have
$
	U_m^T A U_m = K_m,
$
that is, $K_m$ is the compression of $A$ onto the $m$th Krylov subspace. 
The Arnoldi method requires $\mathcal O(nm^2)$ operations, \rev{in addition} to the $m$ matrix-vector products with $A$. 

\subsection{Non-orthonormal basis of the Krylov subspace}

As $m$ increases, the quadratic dependence on $m$ of the complexity of the Arnoldi method becomes disadvantageous.  In the following, we discuss two other ways of constructing bases of $\mathcal{K}_m(A,b)$, which are not orthonormal, but can be computed faster. In the works~\cite{Balabanov2020,Nakatsukasa2021} such bases were introduced and used in the context of linear systems and eigenvalue problems. In our work, we extend this approach to matrix functions; indeed, as we will see in Section~\ref{sec:functions}, orthonormality is not necessary for approximating $f(A)b$.

The algorithms that we discuss build a basis $V_m$ of $\mathcal{K}_m(A,b)$ by rescaling $b$ and then adding one vector at a time. To enlarge the basis, \rev{the last basis vector is pre-multiplied by $A$}, and then a suitable linear combination of the previous basis vectors is added to the new vector. A rescaled version of the result becomes the new basis vector. Algorithms that fall into the described framework give rise to an Arnoldi-like decomposition
\begin{equation}\label{eq:arnoldilike}
	A V_m = V_m H_m + h_{m+1,m} v_{m+1} e_m^T = V_{m+1} \underline H_m, \qquad \underline H_m = \begin{bmatrix} H_m \\ h_{m+1,m} e_m^T\end{bmatrix} \in \R^{(m+1) \times m},
\end{equation}
where $V_m$ is the computed basis of $\mathcal{K}_m(A,b)$ and $H_m$ is an upper Hessenberg matrix. A key advantage of this more general framework is that it allows for cheaper linear combinations for producing the new basis vector compared to the modified Gram-Schmidt procedure by the Arnoldi method. 
The two methods from~\cite{Balabanov2020,Nakatsukasa2021} belong to this framework and we summarize them in Sections~\ref{sec:BG} and~\ref{sec:NT}, respectively.

\subsubsection{Building the basis by orthogonalizing a sketch}\label{sec:BG}

The idea of the method proposed in~\cite{Balabanov2020} is that instead of orthogonalizing each new basis vector against all the previous basis vectors, one maintains a sketched version of the basis and orthogonalizes a sketch of the new vector against the sketch of the previous basis vectors. We now describe more precisely how the basis is constructed, and the procedure is summarized in Algorithm~\ref{alg:BG}.  First of all we choose a sketch matrix $\Theta \in \R^{s \times n}$, for some $m < s \ll n$.
At the $i$th step we build a new vector $w_i$ by multiplying $A$ with the last vector of the basis. We also keep a sketched version of the basis $S_{i-1} = \Theta V_{i-1}$. We compute the sketch $p_i := \Theta w_i$\rev{,} we perform one step of Gram-Schmidt orthogonalization on $\begin{bmatrix} S_{i-1} & p_i \end{bmatrix}$ (which is a small matrix)\rev{, and we denote by $S_i$ the resulting $s \times i$ matrix (which has orthonormal columns)}. We collect the corresponding coefficients $r_i$ in an upper triangular matrix $R$; see lines~\ref{line:beginGS}--\ref{line:endGS}. Then, we use these same coefficients to approximately orthogonalize the vector $w_i$ with respect to the previous vectors, by computing 
\begin{equation*}
    v_i := (w_i - V_{i-1} r_i)/r_{ii}.
\end{equation*}
The vector $v_i$ is scaled in such a way that its sketched version has unit length, so that the sketched basis $S_i$ remains orthonormal. Under suitable assumptions on the sketch matrix $\Theta$, the basis $V_{m+1}$ obtained by Algorithm~\ref{alg:BG} is well-conditioned; see~\cite[Proposition 4.1]{Balabanov2020}.

We choose $\Theta$ to be a sub-sampled randomized Hadamard transfom (SRHT)~\cite{Tropp2011}: $\Theta$ consists of selecting (uniformly at random) $s$ rows from the product of the Walsh-Hadamard transform with a diagonal matrix $D$ with random i.i.d. Rademacher ($\pm1$) entries on the diagonal. The product of $\Theta$ with a vector can be computed in $\mathcal{O}(n\log n)$ time. We remark that other choices are possible, such as the subsampled randomized Fourier transform or sparse sign matrices; see, e.g.,~\cite[Sections 8--9]{Martinsson2020}. 

Let us compare the cost of Algorithm~\ref{alg:BG} with the cost of the standard Arnoldi algorithm. At the $i$th iteration, both require a matrix-vector product with $A$; Algorithm~\ref{alg:BG} then requires one multiplication with the current basis $V_{i-1}$ (in line~\ref{line:approxorth}), while the standard Arnoldi algorithm requires two multiplications with $V_{i-1}$. All the other computations are comparably inexpensive. Therefore, when, e.g., $A$ is very sparse, we can expect Algorithm~\ref{alg:BG} to be up to twice as fast as  Arnoldi.

\begin{algorithm}
	\caption{Construction of Krylov subspace basis as in~\cite[Algorithm 2]{Balabanov2020}}
	\label{alg:BG}
	\begin{algorithmic}[1]
		\REQUIRE{Matrix $A$, vector $b$, number of iterations $m$, parameter $s \ll n$ (sketch size)}
		\ENSURE{Pair $(V_{m+1}, \underline{H}_m)$ that satisfies Arnoldi-like decomposition~\eqref{eq:arnoldilike}}
		\STATE{Draw sketch matrix $\Theta$ of size $s \times n$}
		\STATE{Initialize $R$ as $(m+1)\times (m+1)$ zero matrix}
		\STATE{$w_1 \leftarrow b$}
		\STATE{$p_1 \leftarrow \Theta w_1$}
		\STATE{$r_{11} \leftarrow \|p_1\|_2$}
		\STATE{$s_1 \leftarrow p_1 / r_{11}$, $S_1 \leftarrow s_1$}
		\STATE{$v_1 \leftarrow w_1/r_{11}$, $V_1 \leftarrow v_1$}
		\FOR {$i = 2, \ldots, m+1$}
		\STATE{$w_i \leftarrow A v_{i-1}$}
		\STATE{$p_i \leftarrow \Theta w_i$}
		\STATE{$r_i \leftarrow S_{i-1}^T p_i$, $R(1:i-1, i)\leftarrow r_i$ }\label{line:beginGS}
		\STATE{$s_i \leftarrow p_i - S_{i-1} r_i$}
		\STATE{$r_{ii} \leftarrow \|s_i\|_2$}
		\STATE{$s_i \leftarrow s_i / r_{ii}$, $S_i \leftarrow \begin{bmatrix} S_{i-1} & s_i \end{bmatrix}$}\label{line:endGS}
		\STATE{$v_i \leftarrow (w_i - V_{i-1} r_i)/r_{ii}$, $V_i \leftarrow \begin{bmatrix} V_{i-1} & v_i \end{bmatrix}$}\label{line:approxorth}
		\ENDFOR
		\STATE{$\underline{H}_m \leftarrow R(:, 2:m+1)$}
	\end{algorithmic}
\end{algorithm}

\subsubsection{Building the basis by truncated orthogonalization}\label{sec:NT}

In the following, we \rev{describe}  the ``truncated orthogonalization'' method proposed in~\rev{\cite[Chapter 6.4.2]{Saad2003} and used in~\cite{Nakatsukasa2021} in the context of sketching for linear systems and eigenvalue problems. The procedure is summarized} in Algorithm~\ref{alg:NT}. Each new vector $w_i = A v_{i-1}$ in line~\ref{line:newvector} is orthogonalized with respect to the last $k$ vectors of the basis only, where $k$ is a fixed small number, e.g., $k=2$; see lines~\ref{line:begink}--\ref{line:endk}. The matrix $H_m$ is built iteratively using the coefficients of such a partial orthogonalization. When $A$ is symmetric, this method coincides for $k=2$ with the Lanczos method and gives -- in exact arithmetic -- an orthonormal basis. When $A$ is nonsymmetric, the produced basis is not orthogonal in general.

The advantage of Algorithm~\ref{alg:NT} is that it avoids the quadratic dependence on $m$; the computational cost is $\mathcal O(nm)$ plus the cost of computing $m$ matrix-vector multiplications \rev{by} $A$. The main disadvantage, however, is that the basis is prone to become increasingly ill-conditioned and it may actually become numerically rank-deficient already after a few iterations. A potential cure for this behavior is \emph{whitening} (see~\cite[Section 2.2]{Nakatsukasa2021}): One monitors the condition number of the basis (or some related quantity) and once ill-conditioning is observed\rev{, an
approximate orthogonalization is performed.} For this purpose we keep a sketched version of the basis (using a sketch matrix $\Theta \in \R^{s \times n}$), and at the end of each cycle we check whether $\Theta V_{i}$ starts to become too ill-conditioned; if this happens, we compute a QR factorization $\Theta V_{i} = Q_{i} R_{i}$ and consider the basis $V_{i} R_{i}^{-1}$ of $\mathcal K_i(A, b)$ instead, which should now be well-conditioned. Accordingly, the new matrix $\underline H_{i-1}$ in the Arnoldi-like relation~\eqref{eq:arnoldilike} becomes $\underline{H}_{i-1} \leftarrow R_{i} \underline H_{i-1} R_{i-1}^{-1}$, where $R_{i-1}$ is the leading $(i-1) \times (i-1)$ submatrix of $R_{i}$. Potentially, we could continue the construction of the basis from the pair $(V_{i}, \underline{H}_{i-1})$ using Algorithm~\ref{alg:NT} with the updated basis, and perform whitening again whenever it is needed, but we observed that in practice whitening is then needed frequently (every few iterations) and the quality of the basis deteriorates quickly. For this reason, after the first time we whiten the basis, we continue the construction of the basis as in Algorithm~\ref{alg:BG}, using the same sketch matrix $\Theta$.

\begin{remark}
Algorithm~\ref{alg:BG} is mathematically equivalent to Algorithm~\ref{alg:NT} followed by whitening after the last step. However, numerically  Algorithm~\ref{alg:BG} is more stable as whitening requires inverting the matrix $R_{i-1}$ which may be ill-conditioned or even numerically singular. 
\end{remark}

\begin{algorithm}
	\caption{Construction of basis as in~\cite{Nakatsukasa2021} (via truncated orthogonalization)}
	\label{alg:NT}
	\begin{algorithmic}[1]
		\REQUIRE{Matrix $A$, vector $b$, number of iterations $m$, truncation parameter $k$}
		\ENSURE{Pair $(V_{m+1}, \underline{H}_m)$ that satisfies Arnoldi-like decomposition~\eqref{eq:arnoldilike}}
		\STATE{$v_1 \leftarrow b / \|b\|_2$}
		\FOR {$i = 2$ \textbf{to} $m+1$}
		\STATE{$ w_i \leftarrow A v_{i-1}$}\label{line:newvector}
		\FOR{$j=\max\{1, i-k\}$ \textbf{to} $i-1$}\label{line:begink}
			\STATE{$h_{j,i-1} \leftarrow v_j^T w_i$}
			\STATE{$w_i \leftarrow w_i - h_{j,i} v_i$}
		\ENDFOR
		\STATE{$h_{i, i-1} \leftarrow \|w\|_2$}
		\STATE{$v_i \leftarrow w_i / \|w_i\|_2$}
		\ENDFOR\label{line:endk}
		\STATE{$V_{m+1} \leftarrow [v_1, \ldots, v_{m+1}]$}
	\end{algorithmic}
\end{algorithm}

\section{Computing $f(A)b$ via Krylov subspace methods}\label{sec:functions}

When $U_m$ is orthonormal, the classical Krylov subspace projection approach to approximate $f(A)b$ consists in taking
\begin{equation}\label{eq:standardfm}
	f_m := \|b\|_2 U_m f(K_m) e_1, \quad K_m = U_m^T A U_m.
\end{equation}
The convergence of $f_m$ to $f(A)b$ is related to polynomial approximation of $f$ on a suitable region of the complex plane containing the eigenvalues of $A$, and will be reviewed at the beginning of Section~\ref{sec:convergence}.

\subsection{Computing $f_m$ from Arnoldi-like decompositions}

The approximation~\eqref{eq:standardfm} can be expressed in terms of \emph{any} basis $V_m$ of the Krylov subspace $\mathcal{K}_m(A,b)$. We include the following lemma for completeness.

\begin{lemma}\label{lemma:basis}
The approximation $f_m$ defined in~\eqref{eq:standardfm} satisfies
\begin{equation}\label{eq:krylovproj}
	f_m = V_m f(V_m^{\dagger} A V_m) V_m^{\dagger} b,
\end{equation}
for an arbitrary basis $V_m$ of $\mathcal{K}_m(A,b)$,
where $\dagger$ denotes the pseudoinverse of a matrix. 
\end{lemma}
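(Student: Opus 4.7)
The plan is to reduce the arbitrary-basis expression on the right-hand side of~\eqref{eq:krylovproj} to the orthonormal-basis expression~\eqref{eq:standardfm} via a change of basis. Since $U_m$ and $V_m$ are two bases of the same $m$-dimensional subspace $\mathcal{K}_m(A,b)$, there exists a unique invertible matrix $T \in \R^{m \times m}$ with $V_m = U_m T$. Because $V_m$ has full column rank, its Moore--Penrose pseudoinverse is $V_m^{\dagger} = T^{-1} U_m^T$ (using $U_m^T U_m = I_m$).

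Next I would compute each factor on the right-hand side of~\eqref{eq:krylovproj} in terms of $T$, $U_m$, and $K_m$. The compression becomes
\begin{equation*}
    V_m^{\dagger} A V_m = T^{-1} U_m^T A U_m T = T^{-1} K_m T,
\end{equation*}
which is similar to $K_m$. The key algebraic fact I would invoke is the similarity invariance of matrix functions, namely $f(T^{-1} K_m T) = T^{-1} f(K_m) T$. This is immediate from the contour-integral definition given in the introduction, since $(zI - T^{-1} K_m T)^{-1} = T^{-1}(zI - K_m)^{-1}T$ and the scalars $T^{-1}$, $T$ can be pulled outside the integral.

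For the last factor I would use that $V_m$ spans $\mathcal{K}_m(A,b)$, so $b \in \mathrm{Range}(V_m)$, hence $V_m V_m^{\dagger} b = b$ and in particular $V_m^\dagger b = T^{-1} U_m^T b$. Since $U_m$ is the orthonormal Arnoldi basis with first column $u_1 = b/\|b\|_2$, we have $U_m^T b = \|b\|_2 e_1$, and therefore $V_m^{\dagger} b = \|b\|_2 T^{-1} e_1$. Assembling the three pieces,
\begin{equation*}
    V_m f(V_m^{\dagger} A V_m) V_m^{\dagger} b = (U_m T)(T^{-1} f(K_m) T)(\|b\|_2 T^{-1} e_1) = \|b\|_2 U_m f(K_m) e_1 = f_m,
\end{equation*}
which is the desired identity.

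The proof is essentially a bookkeeping exercise and I do not expect any serious obstacle; the only point worth being careful about is justifying the similarity invariance $f(T^{-1} K_m T) = T^{-1} f(K_m) T$ under the standing analyticity assumption on $f$ and the fact that the eigenvalues of $K_m$ (equivalently, of $T^{-1} K_m T$) lie in the domain $\mathbb{D}$ on which $f$ is analytic, so that the contour integral defining $f$ applies to both matrices on the same contour.
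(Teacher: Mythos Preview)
Your proof is correct and follows essentially the same route as the paper: write $V_m = U_m T$ for an invertible $T$ (the paper uses $R_m$), use $V_m^\dagger = T^{-1}U_m^T$, and then cancel the $T$'s via the similarity invariance of matrix functions to recover~\eqref{eq:standardfm}. The paper's argument is terser, but the steps and ideas are identical.
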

\begin{proof}
As $U_m$ and $V_m$ span the same subspace, there is an invertible matrix $R_m \in \R^{m \times m}$  such that $V_m = U_m R_m$. Thus,
\begin{align*}
	V_m f(V_m^{\dagger} A V_m)V_m^{\dagger}b & = U_m R_m f(R_m^{-1} U_m^T A U_m R_m) R_m^{-1} U_m^T b \\
	& = U_m R_m R_m^{-1} f(U_m^T A U_m)R_m R_m^{-1} U_m^T b\\
	& = \|b\|_2 U_m f(K_m) e_1.
\end{align*}
\end{proof}
If an Arnoldi-like decomposition~\eqref{eq:arnoldilike} is available, like in Algorithms~\ref{alg:BG} and~\ref{alg:NT}, the expression~\eqref{eq:krylovproj} simplifies. Indeed, we have that
\begin{equation} \label{eq:relationxx}
	V_m^{\dagger} A V_m = H_m + h_{m+1,m}(V_m^{\dagger} v_{m+1})e_m^T.
\end{equation}
Therefore, only the last column of $V_m^{\dagger} A V_m$ differs from the matrix $H_m$ that is already computed.  
Moreover, we have that $V_m^{\dagger} b = \gamma_b e_1$ is a scalar multiple of $e_1$, with $\gamma_b = r_{11}$ for Algorithm~\ref{alg:BG} and $\gamma_b = \|b\|_2$ for Algorithm~\ref{alg:NT}.

 The vector $y := V_m^{\dagger} v_{m+1}$ appearing in~\eqref{eq:relationxx} is the solution of
\begin{equation}\label{eq:leastsquares}
	y = \arg\min_{z \in \mathbb{R}^m} \|V_m z - v_{m+1}\|_2.
\end{equation}
When $n \gg m$, solving this overdetermined least-squares problem exactly 
constitutes a significant part of computing the approximation~\eqref{eq:krylovproj}. 

\subsubsection{Solving the least-squares problem}

When the basis $V_m$ is well conditioned (e.g., when it is obtained from Algorithm~\ref{alg:BG}), a few iterations of LSQR~\cite{Paige1982} applied to~\eqref{eq:leastsquares} \rev{are} sufficient to get a good approximation of $V_m^\dagger v_{m+1}$. When $V_m$ is not well conditioned (which is frequently the case when using Algorithm~\ref{alg:NT}), LSQR needs to be combined with a preconditioner. For this purpose, we use Blendenpik~\cite{Blendenpik}, which computes a random sketch of the basis using, once again, a SRHT, and then uses the $R$ factor from a QR decomposition of the sketch as a preconditioner. Note that Algorithm~\ref{alg:BG} already orthonormalizes a sketch of the basis (the matrix $S_m$) and therefore the use of Blendenpik, in particular with the same sketch, would be redundant. 
We remark that it is often not necessary to obtain a very accurate solution of the least-squares problem~\eqref{eq:leastsquares}; see Lemma~\ref{lemma:derivative} below. The proposed methods are summarized in Algorithms~\ref{alg:fAb-LS-BG} and~\ref{alg:fAb-LS-NT}.

\begin{algorithm}
	\caption{Krylov subspace approximation of $f(A)b$ using Algorithm~\ref{alg:BG}}
	\label{alg:fAb-LS-BG}
	\begin{algorithmic}[1]
		\REQUIRE{Matrix $A$, vector $b$, number of iterations $m$}
		\ENSURE{Approximation of $f(A)b$}
		\STATE{Get Arnoldi-like decomposition $(V_{m+1}, \underline H_m)$ and $r_{11}$ from Algorithm~\ref{alg:BG}}
		\STATE{Find $y = \arg \min_{z \in \R^m} \|V_m z - v_{m+1}\|_2$ using LSQR}
		\STATE{Return $r_{11} V_m f(H_m + h_{m+1,m}ye_m^T) e_1$}
	\end{algorithmic}
\end{algorithm}

\begin{algorithm}
	\caption{Krylov subspace approximation of $f(A)b$ using Algorithm~\ref{alg:NT}}
	\label{alg:fAb-LS-NT}
	\begin{algorithmic}[1]
		\REQUIRE{Matrix $A$, vector $b$, number of iterations $m$}
		\ENSURE{Approximation of $f(A)b$}
		\STATE{Get Arnoldi-like decomposition $(V_{m+1}, \underline{H}_m)$ from  Algorithm~\ref{alg:NT}}
		\STATE{Find $y = \arg \min_{z \in \R^m} \|V_m z - v_{m+1}\|_2$ using  Blendenpik~\cite{Blendenpik}}
		\STATE{Return $\|b\|_2 V_m f(H_m + h_{m+1,m}ye_m^T) e_1$}
	\end{algorithmic}
\end{algorithm}

\subsection{A cheaper approximation}
An alternative to the approximation~\eqref{eq:krylovproj} is to evaluate $f$ on the matrix $H_m$ instead of $V_m^{\dagger} A V_m=H_m + h_{m+1,m}(V_m^{\dagger} v_{m+1})e_m^T$, that is, to approximate
\begin{equation}\label{eq:noproj}
	f(A)b \approx \tilde f_m := V_m f(H_m) V_m^{\dagger}b.
\end{equation}
This approximation is cheaper to compute because the solution of the least-squares problem is avoided. Note that~\eqref{eq:noproj} coincides with~\eqref{eq:standardfm} if the basis $V_m$ is orthonormal, but in general $H_m \neq V_m^{\dagger} A V_m$. We recall that these two Hessenberg matrices only differ in their last column, see~\eqref{eq:relationxx}, and the heuristics behind the approach~\eqref{eq:noproj} is that this difference may only have a mild influence on the first column of $f(V_m^{\dagger} A V_m)$; see Theorem~\ref{thm:lastcol} below for additional theoretical insight.  This method is summarized in Algorithm~\ref{alg:fAb-cheap}.
\rev{Note that for $f(z) = 1/z$ (that is, when solving a linear system) Algorithm~\ref{alg:fAb-cheap} becomes equalivant to Saad's Incomplete Orthogonalization Method (IOM) described in~\cite[Alg. 6.6]{Saad2003}. For general $f$, an approximation of the form~\eqref{eq:noproj} for non-orthonormal $V_m$ is used in the context of restarted methods; in particular Theorem 2.4 in~\cite{Eiermann2006} establishes interpolation properties of this approximation.  }

\begin{algorithm}
	\caption{$f(A)b$ without solving least-squares problems}
	\label{alg:fAb-cheap}
	\begin{algorithmic}[1]
		\REQUIRE{Matrix $A$, vector $b$, number of iterations $m$}
		\ENSURE{Approximation of $f(A)b$}
		\STATE{Get Arnoldi-like decomposition $(V_{m+1}, \underline{H}_m)$ from Algorithm~\ref{alg:NT}}
		\STATE{Return $\|b\|_2 V_m f(H_m) e_1$}
	\end{algorithmic}
\end{algorithm}

\subsection{Comparison with related literature:~\cite{Guettel2022}}\label{sec:guettelcompare}

While preparing this work, 
the recent preprint by
G\"uttel and Schweitzer~\cite{Guettel2022} appeared, which proposes -- independently from our work -- two other randomized methods, called sFOM and sGMRES, for approximating $f(A)b$. We focus our comparison on sFOM~\cite[Algorithm 1]{Guettel2022}, because sGMRES works best for Stieltjes functions and requires numerical quadrature otherwise.

While building a (nonorthonormal) basis $V_m$ (Algorithm~\ref{alg:NT} is used in the numerical examples of~\cite{Guettel2022}), sFOM also computes $\Theta V_m$ and $\Theta A V_m$ for a sketch matrix $\Theta$. After computing a QR factorization $\Theta V_m = S_m R_m$ of the sketched basis, sFOM returns the approximation
\begin{equation}\label{eq:sFOM}
    \hat f_m := V_m  \left ( R_m^{-1} f\left ( S_m^T \Theta A V_m R_m^{-1} \right ) S_m^T \Theta b\right ).
\end{equation}
If Algorithm~\ref{alg:fAb-cheap} was combined with \rev{the} whitening \rev{of} the basis obtained from Algorithm~\ref{alg:NT} (see Section~\ref{sec:NT}), the returned approximation would be 
mathematically equivalent to~\eqref{eq:sFOM}.
To see this, note that the whitened basis of Algorithm~\ref{alg:NT} takes the form $V_{m+1} R_{m+1}^{-1}$ and therefore the Arnoldi-like relation~\eqref{eq:arnoldilike} implies
\begin{equation*}
(\Theta V_m)^T \Theta A V_m = (\Theta V_m)^T\Theta V_m H_m + h_{m,m+1}(\Theta V_m)^T\Theta v_{m+1}e_{m+1}^T.
\end{equation*}
In turn, as the columns of $\Theta V_{m+1}$ are orthonormal, this simplifies to $S_m^T \Theta A V_m = H_m$. However, the advantage of sFOM over Algorithm~\ref{alg:fAb-cheap} combined with whitening is that in~\eqref{eq:sFOM} it is not needed to explicitly compute $V_m R_m^{-1}$.  
For both, Algorithm~\ref{alg:fAb-cheap} and sFOM it is difficult to derive meaningful a priori convergence statements; see Theorem~\ref{thm:lastcol} and  Corollary 2.1 in~\cite{Guettel2022}, respectively. In practice, both algorithms can be quite fast in some cases but in other cases their observed convergence behavior is erratic; see Figures~\ref{fig:gnutella}, \ref{fig:spectralProj}, and~\ref{fig:wikivote} below.

\begin{remark}
Another alternative view of sFOM is that it is mathematically equivalent to use Algorithm~\ref{alg:NT} for generating the basis, followed by obtaining the last column of $V_m^{\dagger} A V_m$ through solving the least-squares problem $\arg\min_{z \in \R^m} \|V_m z - v_{m+1}\|_2$ approximately with a sketch-and-solve approach, that is, $$\arg\min_{z \in \R^m} \|\Theta(V_m z - v_{m+1})\|_2.$$
\end{remark}

Table~\ref{tab:cost} compares the computational complexity of all algorithms discussed in this section. This reiterates the point that the $\mathcal O(nm^2)$ complexity can be avoided when using Algorithm~\ref{alg:NT}. 

\begin{table}[htbp]
\caption{\textbf{Cost comparison.} Arithmetic cost of approximately computing $f(A)b$
using an $m$-dimensional Krylov space, using Algorithms~\ref{alg:fAb-LS-BG}, \ref{alg:fAb-LS-NT}, \ref{alg:fAb-cheap}, Arnoldi, or sFOM. We assume that the truncation parameter $k$ in Algorithm~\ref{alg:NT} satisfies $k \ll m \ll n$ and we denote by $T_{\mathrm{matvec}}$ the cost of one matrix-vector multiplication with $A$.} \label{tab:cost}
\begin{center} \renewcommand{\arraystretch}{1.2}
\begin{tabular}{|l||c|c|c|c|c|c}
\hline
			& Matrix access				& Form $V_m$	& Sketch		& $f(H_m)$	& Form approx.	\\
\hline
Arnoldi	& $m T_{\mathrm{matvec}}$	& $nm^2$		& ---			& $m^3$		& $nm$			\\
Algorithm~\ref{alg:fAb-LS-BG}	& $m T_{\mathrm{matvec}}$	& $nm^2$		& ---			& $m^3$		& $nm$			\\
Algorithm~\ref{alg:fAb-LS-NT}	& $m T_{\mathrm{matvec}}$	& $nmk$		& $nm\log m$			& $m^3$		& $nm$	
\\
Algorithm~\ref{alg:fAb-cheap}& $m T_{\mathrm{matvec}}$	& $nmk$			& ---	&  $m^3$ 	& $nm$			\\
sFOM & $m T_{\mathrm{matvec}}$	&   $nmk$			& $nm \log m$	& $m^3$		&   $nm$			\\
\hline
\end{tabular}
\end{center}
\end{table}

\section{Convergence analysis}\label{sec:convergence}

In this section, we analyze the convergence of Algorithms~\ref{alg:fAb-LS-BG}, \ref{alg:fAb-LS-NT}, and~\ref{alg:fAb-cheap}. For this purpose,
let $\Pi_m$ denote the vector space of polynomials of  degree at most $m$ and  let  $W(A) = \{ z^* A z \mid z \in \C^n, \|z\|_2 = 1\}$ denote the numerical range of $A$. Given a set $\mathbb{D} \subseteq \C$, we define  $\|f\|_{\mathbb{D}}:=\sup\{|f(z)|: z\in \mathbb D\}$.

\subsection{Analysis of Algorithms~\ref{alg:fAb-LS-BG} and~\ref{alg:fAb-LS-NT}}

We have the following well known result that links the quality of the approximation~\eqref{eq:krylovproj} with polynomial approximation of $f$ on the numerical range of $A$.

\begin{theorem}\label{thm:convclassic}
Let $A \in \R^{n\times n}$, $b \in \R^n$, and a function $f:\mathbb{D}\rightarrow \mathbb{C}$ be holomorphic on a domain $\mathbb{D}\subseteq \mathbb{C}$ containing $W(A)$. Then the approximation $f_m$ from~\eqref{eq:krylovproj} satisfies
	\begin{equation*}
		\|f(A)b - f_m \|_2 \le 2(1+\sqrt{2}) \min_{p \in \Pi_{m-1}} \|f-p\|_{W(A)}.
	\end{equation*}
\end{theorem}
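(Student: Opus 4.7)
The plan is to follow the classical Arnoldi/Krylov convergence argument that combines the exactness of the Krylov projection on low-degree polynomials with the Crouzeix--Palencia bound $\|g(M)\|_2 \le (1+\sqrt{2})\|g\|_{W(M)}$ applied separately to $A$ and to its compression.

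Thanks to Lemma~\ref{lemma:basis}, I may evaluate $f_m$ using whichever basis of $\mathcal{K}_m(A,b)$ is most convenient; I would pick the orthonormal Arnoldi basis $U_m$, so $K_m = U_m^T A U_m$, $\|U_m\|_2 = 1$, and $U_m^T b = \|b\|_2 e_1$. The first step is the exactness property: for any $p\in \Pi_{m-1}$, the vector $p(A)b$ lies in $\mathcal{K}_m(A,b)$ and in fact
\begin{equation*}
p(A)b \;=\; \|b\|_2\, U_m\, p(K_m)\, e_1,
\end{equation*}
which follows from $A^j b = \|b\|_2 U_m K_m^j e_1$ for $0 \le j \le m-1$ (a standard consequence of the Arnoldi relation and induction on $j$). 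This step is routine but it is the conceptual core of the argument.

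Given this, I split the error with an arbitrary $p\in\Pi_{m-1}$ as
\begin{equation*}
f(A)b - f_m \;=\; \bigl(f(A)b - p(A)b\bigr) \;-\; \|b\|_2\, U_m\bigl(f(K_m)-p(K_m)\bigr)e_1,
\end{equation*}
and bound the two pieces separately. For the first, the triangle inequality and $\|b\|_2$ factored out give $\|(f-p)(A)b\|_2 \le \|b\|_2\,\|(f-p)(A)\|_2$. For the second, I use $\|U_m\|_2 = 1$ and $\|e_1\|_2 = 1$ to obtain $\|b\|_2\,\|(f-p)(K_m)\|_2$. Now I invoke the Crouzeix--Palencia theorem twice: once to get $\|(f-p)(A)\|_2 \le (1+\sqrt{2})\|f-p\|_{W(A)}$, and once for the compression, using the elementary inclusion $W(K_m) = W(U_m^T A U_m) \subseteq W(A)$ (because any unit $z\in\C^m$ produces a unit $U_m z\in\C^n$), so that $\|(f-p)(K_m)\|_2 \le (1+\sqrt{2})\|f-p\|_{W(K_m)} \le (1+\sqrt{2})\|f-p\|_{W(A)}$. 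Adding the two pieces yields the factor $2(1+\sqrt{2})$, and minimizing over $p\in\Pi_{m-1}$ completes the argument. (The statement as written appears to tacitly normalize $\|b\|_2=1$; otherwise a $\|b\|_2$ should appear on the right-hand side.)

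The only nontrivial ingredient is the Crouzeix--Palencia inequality, which I would simply cite; everything else reduces to the exactness property and the inclusion $W(K_m)\subseteq W(A)$. The main thing to get right is making sure $f$ (and therefore $f-p$) is indeed holomorphic on an open set containing both $W(A)$ and $W(K_m)$ so that Crouzeix--Palencia applies, but this is guaranteed by the hypothesis $W(A)\subset \mathbb{D}$ together with $W(K_m)\subseteq W(A)$.
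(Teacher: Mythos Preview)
Your argument is correct and follows exactly the classical route the paper has in mind: the paper's own proof merely cites the standard Arnoldi convergence result (Crouzeix--Palencia applied to $A$ and to $K_m$, combined with the exactness property and $W(K_m)\subseteq W(A)$) and then invokes Lemma~\ref{lemma:basis} to pass to an arbitrary basis, which is precisely what you do but with the details spelled out. Your observation about the implicit normalization $\|b\|_2=1$ is also apt.
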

\begin{proof}
    This is a classical result for the approximation given by the Arnoldi decomposition; see, e.g.,~\cite[Corollary 3.4]{Guettel2013}. Thanks to Lemma~\ref{lemma:basis}, the result extends to any approximation of the form~\eqref{eq:krylovproj}.
\end{proof}

Theorem~\ref{thm:convclassic} assumes that the least-squares problem~\eqref{eq:leastsquares} is solved exactly. 
In line 2 of Algorithms~\ref{alg:fAb-LS-BG} and~\ref{alg:fAb-LS-NT}, only an approximation by an iterative solver is used. The following lemma quantifies the impact of this approximation. 
\begin{lemma}\label{lemma:derivative}
	Let $\tilde y$ be an approximation to $y = \arg\min_{z\in\R^m} \|V_m z - v_{m+1}\|_2$. Assume that $f$ is holomorphic on a convex open set $\mathbb{D}$ containing $W(V_m^{\dagger} A V_m) \cup W(H_m + h_{m+1,m}\tilde{y} e_m^T)$.  Then 
	\begin{equation*}
		\|f(V_m^{\dagger} A V_m) - f(H_m + h_{m+1,m}\tilde{y} e_m^T)\|_F \le (1 + \sqrt{2})^2 \|f'\|_{\mathbb{D}}\|h_{m+1,m}(y - \tilde{y})\|_2,
	\end{equation*}
	where $\|\cdot\|_F$ denotes the Frobenius norm.
\end{lemma}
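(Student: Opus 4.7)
Set $M_1 := V_m^{\dagger} A V_m$ and $M_2 := H_m + h_{m+1,m}\tilde{y}\, e_m^T$. By~\eqref{eq:relationxx}, $M_1 - M_2 = h_{m+1,m}(y - \tilde y)\, e_m^T$, which is a rank-one matrix with $\|M_1 - M_2\|_F = \|h_{m+1,m}(y - \tilde y)\|_2$. So the statement reduces to a Lipschitz-type bound $\|f(M_1) - f(M_2)\|_F \le (1+\sqrt 2)^2 \|f'\|_{\mathbb{D}} \|M_1 - M_2\|_F$, and the rank-one structure of the perturbation plays no further role in the argument.

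I would integrate along the line segment $M(t) := (1-t)M_2 + t M_1$, $t \in [0,1]$. For any unit vector $z \in \mathbb{C}^m$, $z^* M(t) z$ is a convex combination of elements of $W(M_1) \cup W(M_2) \subseteq \mathbb{D}$, and since $\mathbb{D}$ is convex, $W(M(t)) \subseteq \mathbb{D}$ for every $t$. The fundamental theorem of calculus for a holomorphic matrix function then gives
\[
f(M_1) - f(M_2) = \int_0^1 L_f\bigl(M(t); M_1 - M_2\bigr)\, dt,
\]
where $L_f(M;E) := \tfrac{1}{2\pi i}\int_\Gamma f(z)(zI-M)^{-1} E (zI-M)^{-1}\, dz$ is the Fréchet derivative of $f$ at $M$ in direction $E$ (with $\Gamma \subset \mathbb{D}$ enclosing $W(M)$).

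The technical heart of the argument is the Crouzeix--Palencia-type Lipschitz estimate
\[
\|L_f(M;E)\|_F \;\le\; (1+\sqrt 2)^2\, \|f'\|_{\mathbb{D}}\, \|E\|_F \quad \text{whenever } W(M) \subseteq \mathbb{D}.
\]
The squared constant hints at two applications of Crouzeix--Palencia $\|g(N)\| \le (1+\sqrt 2) \|g\|_{W(N)}$. To obtain it I would fix some $z_0 \in \mathbb{D}$ and write $f(z) = f(z_0) + (z - z_0)g(z)$ with $g(z) := (f(z) - f(z_0))/(z - z_0)$; by the mean value inequality on the convex set $\mathbb{D}$, $g$ is holomorphic on $\mathbb{D}$ with $\|g\|_{\mathbb{D}} \le \|f'\|_{\mathbb{D}}$. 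Substituting into the integral defining $L_f$ and using $(z - z_0)(zI - M)^{-1} = I + (M - z_0 I)(zI - M)^{-1}$ yields the recursion
\[
L_f(M;E) = E\, g(M) + (M - z_0 I)\, L_g(M;E).
\]
One invocation of Crouzeix--Palencia controls $\|g(M)\|$; the second application would bound $\|L_g(M;E)\|_F$ through the representation of $L_g$ as the $(1,2)$-block of $g$ applied to the $2 \times 2$ block matrix $\bigl[\begin{smallmatrix} M & E \\ 0 & M \end{smallmatrix}\bigr]$. The step I expect to be the hardest is controlling the numerical range of this block matrix with a suitable shift (or limiting argument in the size of the off-diagonal block) so that the final bound involves $\|f'\|_{\mathbb{D}}$ rather than the supremum of $g$ on an enlarged set.

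Once the Fréchet-derivative estimate is in place, taking Frobenius norms inside the outer integral yields
\[
\|f(M_1) - f(M_2)\|_F \le (1+\sqrt 2)^2\, \|f'\|_{\mathbb{D}}\, \|M_1 - M_2\|_F = (1+\sqrt 2)^2\, \|f'\|_{\mathbb{D}}\, \|h_{m+1,m}(y - \tilde y)\|_2,
\]
which is the asserted inequality.
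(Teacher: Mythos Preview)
Your reduction is exactly what the paper does: set $M_1=V_m^\dagger A V_m$, $M_2=H_m+h_{m+1,m}\tilde y\,e_m^T$, note $\|M_1-M_2\|_F=\|h_{m+1,m}(y-\tilde y)\|_2$, and invoke the Lipschitz inequality $\|f(M_1)-f(M_2)\|_F\le (1+\sqrt 2)^2\|f'\|_{\mathbb D}\|M_1-M_2\|_F$. The paper does not prove this last inequality; it simply cites it as Corollary~4.2 in \cite{Beckermann2021}.

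Where you diverge is in trying to establish that Lipschitz bound from scratch. Your line-segment integration and the reduction to a Fr\'echet-derivative estimate $\|L_f(M;E)\|_F\le (1+\sqrt 2)^2\|f'\|_{\mathbb D}\|E\|_F$ are fine. But the recursion $L_f(M;E)=E\,g(M)+(M-z_0I)\,L_g(M;E)$ is not the route that yields the clean constant: the factor $M-z_0I$ has no a~priori bound, and bounding $L_g$ via the block matrix $\bigl[\begin{smallmatrix}M&E\\0&M\end{smallmatrix}\bigr]$ runs into precisely the numerical-range issue you flag as ``hardest''---so as written this step is a genuine gap. The argument behind the cited result instead uses the divided difference $f[x,y]=(f(x)-f(y))/(x-y)$ directly: one writes $L_f(M;E)$ as a double Cauchy integral (or, equivalently, as a bivariate matrix function acting on $E$), applies Crouzeix--Palencia once in each variable, and uses that convexity of $\mathbb D$ gives $\sup_{x,y\in\mathbb D}|f[x,y]|\le\|f'\|_{\mathbb D}$. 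That is what produces the squared constant without any recursion or control of $\|M-z_0I\|$. If you want a self-contained proof, that is the line to pursue; otherwise, citing \cite{Beckermann2021} as the paper does is the intended shortcut.
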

\begin{proof}
	Corollary 4.2 in~\cite{Beckermann2021} states that for any square matrices $C$ and $D$ of the same size, $\|f(C) - f(D)\|_F \le (1+\sqrt{2})^2 \|f'\|_{\mathbb{D}} \|C-D\|_F$ for a convex open set $\mathbb D$ that contains $W(C)\cup W(D)$. The lemma follows by applying this result to the matrices $V_m^{\dagger} A V_m$ and $H_m + h_{m+1,m}\tilde y r_m^T$. The difference of these two matrices has  norm  $\|h_{m+1,m}(y-\tilde y)e_m^T\|_F = \| h_{m+1,m} (y - \tilde y)\|_2$.
\end{proof}

As the norm of $V_m$ can be expected to remain modest in our algorithms, Lemma \ref{lemma:derivative} implies that the error when solving the  least-squares problem will not get amplified significantly, provided that the derivative $f^\prime$ behaves nicely \rev{ and that the numerical ranges of the matrices $V_m^{\dagger} A V_m$ and $H_m + h_{m+1,m} \tilde y e_m^T$ do not become too large}. 

\subsection{Analysis of Algorithm~\ref{alg:fAb-cheap}}
\begin{theorem}\label{thm:lastcol}
	Considering the setting of Algorithm~\ref{alg:fAb-cheap}, assume that $f$ is holomorphic on a domain containing $W(V_m^{\dagger} A V_m)$ and $W(H_m)$. Then
	\begin{equation}\label{eq:bound5}
	    \|f(V_m^{\dagger} A V_m)e_1 - f(H_m)e_1\|_2 \le (2 + 2\sqrt{2}) \min_{p \in \Pi_{m-1}} \|f-p\|_{W(V_m^{\dagger} A V_m) \cup W(H_m)}.
	\end{equation}
\end{theorem}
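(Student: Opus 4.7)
The plan is to exploit the fact that $V_m^\dagger A V_m$ and $H_m$ differ only in their last column (as recorded in~\eqref{eq:relationxx}), together with a standard trick: both matrices are upper Hessenberg, and for an $m \times m$ upper Hessenberg matrix $M$ the vector $p(M)e_1$ depends only on the first $m-1$ columns of $M$ whenever $\deg p \le m-1$. Combining these two facts, $p(V_m^\dagger A V_m)e_1 = p(H_m)e_1$ for every $p \in \Pi_{m-1}$, so the error $f(V_m^\dagger A V_m)e_1 - f(H_m)e_1$ can be replaced by $(f-p)(V_m^\dagger A V_m)e_1 - (f-p)(H_m)e_1$ for any such $p$. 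Then invoking the Crouzeix--Palencia inequality on each term and using the triangle inequality yields the bound with constant $2(1+\sqrt{2})$.

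The first step is the polynomial invariance claim. I would argue by induction on $k$ that for any $m \times m$ upper Hessenberg matrix $M$, the vector $M^k e_1$ is supported in its first $k+1$ entries and its computation only touches the columns $1,\dots,k$ of $M$. The base case $k=0$ is trivial; the inductive step uses that $M^{k}e_1 = M \cdot M^{k-1}e_1$ and that a matrix-vector product $Mv$ with $v$ supported on indices $1,\dots,k$ only reads the first $k$ columns of $M$, while the Hessenberg structure gives support in the first $k+1$ entries. Taking $k \le m-1$ and extending linearly to $p \in \Pi_{m-1}$, together with the fact that $V_m^\dagger A V_m$ and $H_m$ coincide on columns $1,\dots,m-1$ by~\eqref{eq:relationxx}, gives $p(V_m^\dagger A V_m)e_1 = p(H_m)e_1$.

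The second step is the analytic bound. For any such polynomial $p$, write
\begin{equation*}
    f(V_m^\dagger A V_m)e_1 - f(H_m)e_1 = (f-p)(V_m^\dagger A V_m)e_1 - (f-p)(H_m)e_1.
\end{equation*}
Since $\|e_1\|_2 = 1$, Crouzeix--Palencia (the same $(1+\sqrt{2})$ constant used in the proof of Theorem~\ref{thm:convclassic}) gives
\begin{equation*}
    \|(f-p)(V_m^\dagger A V_m)e_1\|_2 \le (1+\sqrt{2})\|f-p\|_{W(V_m^\dagger A V_m)},
\end{equation*}
and analogously for $H_m$. The triangle inequality then produces $\|f(V_m^\dagger A V_m)e_1 - f(H_m)e_1\|_2 \le 2(1+\sqrt{2})\|f-p\|_{W(V_m^\dagger A V_m)\cup W(H_m)}$, and minimizing over $p \in \Pi_{m-1}$ gives~\eqref{eq:bound5}.

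The only mildly delicate point is the Hessenberg invariance lemma, and that is really just bookkeeping once stated correctly; everything else is a standard Faber-type argument. No sharp control over the individual numerical ranges is needed, which is what makes the bound clean but also somewhat coarse (it only uses their union).
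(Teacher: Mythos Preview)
Your proof is correct and follows essentially the same approach as the paper: both use the Hessenberg structure to conclude $p(V_m^\dagger A V_m)e_1 = p(H_m)e_1$ for $p \in \Pi_{m-1}$, then apply the triangle inequality and Crouzeix--Palencia to each term. The only difference is that you spell out the inductive support argument for the Hessenberg invariance claim, whereas the paper simply asserts it.
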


\begin{proof}
	The matrices $V_m^{\dagger}A V_m$ and $H_m$ are upper Hessenberg and differ  only in their last columns. Therefore, for every polynomial $p \in \Pi_{m-1}$ it holds that $p(V_m^{\dagger}A V_m)e_1 = p(H_m)e_1$. This allows us to write
	\begin{align*}
		\| f(V_m^{\dagger}A V_m)e_1 - f(H_m)e_1 \|_2 & = \|f(V_m^{\dagger}A V_m) e_1\! - \!p(V_m^{\dagger}A V_m)e_1 \!+\! p(H_m)e_1\! - \!f(H_m)e_1 \|_2 \\
		& \le \|(f-p)(V_m^{\dagger}A V_m)e_1\|_2 + \|(f-p)(H_m)e_1\|_2 \\
		& \le \|(f-p)(V_m^{\dagger}A V_m)\|_2 + \|(f-p)(H_m)\|_2\\
		& \le (1 + \sqrt{2}) \left (\|f-p\|_{W(V_m^{\dagger}A V_m)} + \|f-p\|_{W(H_m)}\right ) ,
	\end{align*}
	where the third inequality is due to a result by Crouzeix and Palencia~\cite{Crouzeix2017}.
\end{proof}

An immediate consequence of Theorem~\ref{thm:lastcol} is that the output of Algorithm~\ref{alg:fAb-cheap} is exact for all polynomials of degree up to $m-1$ (and such an output coincides with~\eqref{eq:krylovproj}). 
Such exactness property holds for all algorithms considered in this paper, namely Algorithm~\ref{alg:fAb-LS-BG}, \ref{alg:fAb-LS-NT}, and~\ref{alg:fAb-cheap} (and hence also sFOM).

By the triangle inequality, we have that
\begin{multline*}
    \left \lVert f(A)b - \|b\|_2 V_m f(H_m) e_1 \right \rVert_2 \\
    \le \|f(A)b - \|b\|_2 V_m f(V_m^{\dagger} A V_m)e_1\|_2 + \|b\|_2 \|V_m f(V_m^{\dagger} A V_m)e_1 - V_m f(H_m)e_1\|_2.
\end{multline*}
We can now use Theorem \ref{thm:convclassic} to bound the first term and Theorem~\ref{thm:lastcol} for the second term, in addition to the fact that the spectral norm of $V_m$ is bounded by $\sqrt{m}$ because $V_m$ has columns of unit norm.

While Theorem~\ref{thm:convclassic} ensures that the output of Algorithms~\ref{alg:fAb-LS-BG} and~\ref{alg:fAb-LS-NT} are close to $f(A)b$ whenever $f$ is well approximated by a polynomial of small degree on $W(A)$, in order to obtain a good a priori bound on the error of Algorithm~\ref{alg:fAb-cheap} we need a good polynomial approximation on a larger set, that is, the union of $W(V_m^{\dagger}A V_m)$ and $W(H_m)$. In general, the right-hand-side of~\eqref{eq:bound5} is difficult to control and, in practice, we sometimes observe that Algorithm~\ref{alg:fAb-cheap} fails to converge or the convergence is slowed down (see, e.g., Figures~\ref{fig:ConvDiff2} and~\ref{fig:wikivote}).

\subsection{Discussion on numerical stability}\label{sec:stability}

Rigorous numerical stability analysis of any of the methods discussed in this work appears to be challenging. The \rev{evaluation} of the function of a small matrix $f(H_m)$ is already a nontrivial computation, for which (backward or forward) stability is established only for limited classes of matrices and functions $f$ (e.g., normal matrices with $f=\exp$~\cite[Chapter~10]{Higham2008}). 
To nevertheless gain some insight into the numerical behavior of Algorithms~\ref{alg:fAb-LS-BG} and~\ref{alg:fAb-LS-NT}, particularly in terms of how the conditioning of $V_m$ affects the stability, let us \emph{assume} that $f(H_m)$ is computed in a backward stable fashion, that is, its computed approximation can be written as $f(H_m+\Delta H)$, where $\|\Delta H\|_2=\mathcal O(\epsilon\|H_m\|_2)$ with the machine precision $\epsilon$. Then in the computation of $f(V_m^\dagger A V_m)$ in~\eqref{eq:krylovproj}, 
the backward error in computing $f(V_m^\dagger A V_m)$ becomes
$\epsilon\|V_m^\dagger A V_m\|_2$. Letting $V_m=Q_mR_m$ be a QR factorization, we have 
$\epsilon\|V_m^\dagger A V_m\|_2 = \|R_m^{-1}Q_m^T A Q_mR_m\|_2
\leq \|R_m^{-1}\|_2\|Q_m^T A Q_m\|_2\|R_m\|_2=\kappa_2(R_m)\|Q_m^T A Q_m\|_2$. This suggests that an ill-conditioned basis $V_m$ may \rev{negatively} impact the numerical stability of the algorithms (in practice, the approximation is often good unless $V_m$ is numerically rank-deficient). 
It also suggests that a well-conditioned $V_m$ would yield an approximation similar to the one obtained via full Arnoldi orthogonalization (wherein $V_m=Q_m$), which we observe in our experiments. 
\rev{In practice, convergence is sometimes observed even when $V_m$ is highly ill-conditioned; this has been studied extensively for the Lanczos method~\cite{musco2018stability}, and steepest descent (restarted Arnoldi)~\cite{afanasjew2008generalization} when $A$ is symmetric. However, a complete analysis for the nonsymmetric and randomized case is an open problem.}

\section{Numerical experiments}\label{sec:numexp}

We now test the proposed Algorithms~\ref{alg:fAb-LS-BG}, \ref{alg:fAb-LS-NT}, and~\ref{alg:fAb-cheap} on a variety of examples; these are indicated as ``BG-f(A)'', ``NT1-f(A)'', and ``NT2-f(A)'' in the legends, respectively \rev{(the letters BG and NT are the initials of the authors of the papers~\cite{Balabanov2020,Nakatsukasa2021} from where Algorithms~\ref{alg:BG} and~\ref{alg:NT} are taken)}. The baseline method is ``Arnoldi'', which corresponds to~\eqref{eq:standardfm}.
For some examples, we also compare with sFOM (\cite[Algorithm 1]{Guettel2022}) where the basis is obtained by Algorithm~\ref{alg:NT}\rev{, and with the restarting algorithm from~\cite{Frommer2014}}. In all our examples we set the truncation parameter $k = 2$ in Algorithm~\ref{alg:NT}. The sketch matrix $\Theta$ is an SRHT of size $2m \times n$, where $m$ is the dimension of the Krylov subspace we are considering, unless we indicate otherwise.

All algorithms were implemented and executed in MATLAB R2022a. For the evaluation of $f$ on the compressed matrices we used Matlab \texttt{expm} and \texttt{sqrtm} functions.
We report the runtime and the relative accuracy $\frac{\|f(A)b-\widehat{f(A)b}\|_2}{\|f(A)b\|_2}$, where $\widehat{f(A)b}$ is the computed approximation to $f(A)b$ using one of the algorithms. 
The ``exact'' value of $f(A)b$ is computed using \texttt{expm} or \texttt{sqrtm} when the size of the matrix is smaller than $10^4$, otherwise we approximate the exact value running sufficiently many iterations of the Arnoldi algorithm.
For the implementation of Algorithm~\ref{alg:BG} we have used the implementation available at \url{https://github.com/obalabanov/randKrylov}, which uses a subsampled Walsh-Hadamard transform for sketching. The least-squares problems were solved using Matlab's \texttt{lsqr} function with tolerance parameter $10^{-6}$ and for Blendenpik we used the code at \url{https://github.com/haimav/Blendenpik}. The code for sFOM was taken from \url{https://github.com/MarcelSchweitzer/sketched_fAb}. \rev{The code for the restarted algorithm was taken from \url{http://www.guettel.com/funm_quad/}.}
 All numerical experiments in this work have been run on a laptop with \rev{an} eight-core Intel Core i7-8650U 1.90 GHz CPU and 16 GB of RAM. The code for reproducing the experiments is available at \url{https://github.com/Alice94/RandMatrixFunctions}.

\subsection{Behavior of Algorithms~\ref{alg:fAb-LS-BG}, \ref{alg:fAb-LS-NT}, and~\ref{alg:fAb-cheap}}

\subsubsection{Convection-diffusion problem}\label{sec:convdiff1}
As a first example, we consider the matrix $A$ coming from the discretization of the convection-diffusion problem considered in~\cite[Section 3.1]{Botchev2020}:
\begin{align*}
	& L[u] = -(D_1 u_x)_x - (D_2 u_y)_y + Pe \left ( \frac{1}{2} (v_1 u_x + v_2 u_y) + \frac{1}{2} ((v_1 u)_x + (v_2 u)_y)\right ),\\
	&D_1(x,y) = \begin{cases} 
		1000 & (x,y) \in [0.25, 0.75]^2\\
		1 & \text{otherwise}
		\end{cases} \qquad D_2(x,y) = \frac{1}{2} D_1(x,y)\\
	&v_1(x,y) = x+y, \qquad v_2(x,y) = x-y.
\end{align*}
 We take homogeneous Dirichlet boundary conditions and Peclet number $Pe = 200$, and we consider a discretization by finite differences on a mesh of size $352 \times 352$. The function $f$ is the exponential and the starting vector is set to the values of the function $\sin(\pi x) \sin(\pi y)$ on the finite-difference mesh and then normalized as $b := b/\|b\|_2$. The results are reported in Figure~\ref{fig:ConvDiff1}. We observe that all our proposed methods are stable in this example and they produce approximately the same result. As expected, Algorithm~\ref{alg:fAb-cheap} is by far the fastest one \rev{among our proposed algorithms.  Note that the restarted algorithm (with restart length $20$) takes approximately the same time for a fixed $m$ but converges more slowly.}

\begin{figure}[ht]
	\centering
	\includegraphics[width=0.9\textwidth]{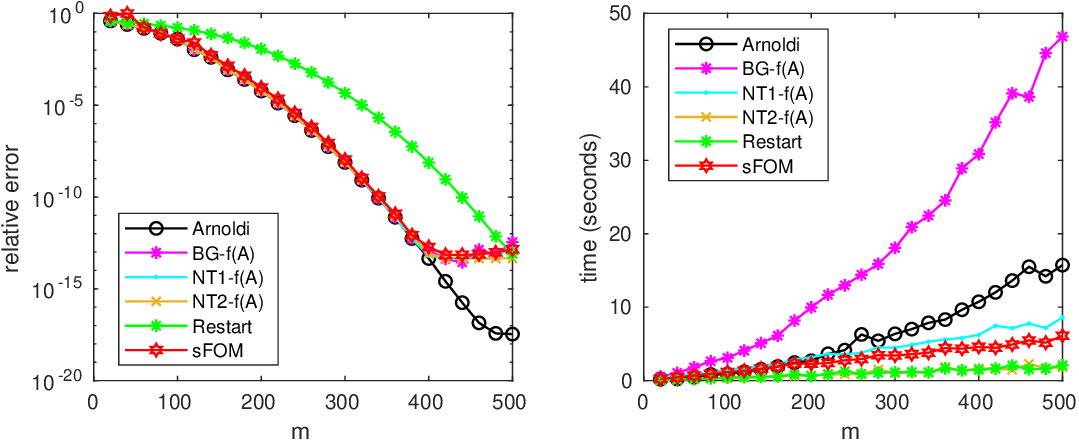}
	\caption{Exponential of the matrix from the convection-diffusion problem in Section~\ref{sec:convdiff1}.}
	\label{fig:ConvDiff1}
\end{figure}

\subsubsection{Option pricing problem}\label{sec:option}
Another example is illustrated in Figure~\ref{fig:toeplitz}. We compute the exponential of the nonsymmetric Toeplitz matrix from Section 6.3 in~\cite{Kressner2018}, size \rev{$n = 2000$} and we take $b$ to be a normalized random vector. \rev{For a restart length $50$, the restarted algorithm exhibits significantly delayed convergence.}

\begin{figure}
	\centering
	\includegraphics[width=0.9\textwidth]{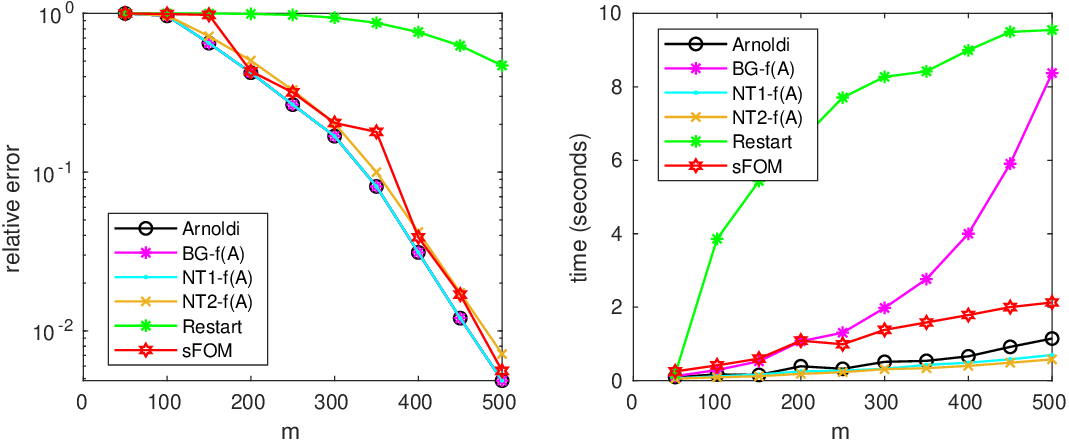}
	\caption{Computing the exponential of the nonsymmetric Toeplitz matrix from \rev{Section~\ref{sec:option}}.}
	\label{fig:toeplitz}
\end{figure}

\subsubsection{Whitening the basis}\label{sec:whitening}

We now consider two examples in which Algorithm~\ref{alg:NT} alone does not give a well-conditioned basis, and therefore whitening (see Section~\ref{sec:NT}) is needed to make the algorithm work. To check the condition number of the sketch $\Theta V_i$ we use Matlab's \texttt{condest} function and we perform whitening if this exceeds $1000$.

We compute the square root of the graph Laplacian of the matrix \texttt{p2p\_Gnutella08} from~\url{https://sparse.tamu.edu}, which was considered in~\cite{Benzi2020}. The matrix has size $n = 6301$, is nonsymmetric, and all its eigenvalues have positive real part. We choose  $b$ to be a random unit vector. The results are reported in Figure~\ref{fig:gnutella}.
 Around the 60th iteration of Algorithm~\ref{alg:NT}, the quality of the basis (in terms of conditioning) starts to deteriorate \rev{quickly}, and indeed the solid cyan line does not converge. However, when whitening is performed, the convergence \rev{is restored}. 
\begin{figure}
	\centering
	\includegraphics[width=0.7\textwidth]{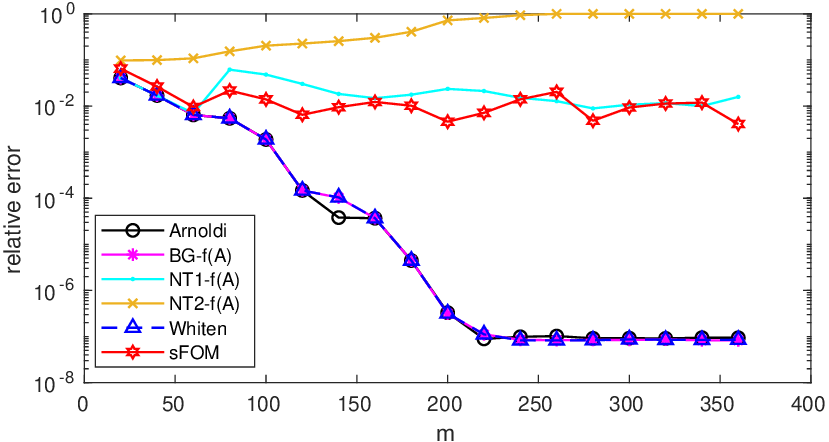}
	\caption{Square root of the graph Laplacian of the matrix from Section~\ref{sec:whitening}.}
	\label{fig:gnutella}
\end{figure}

As another example, we consider the matrix \texttt{bfw782a} from \url{https://math.nist.gov/MatrixMarket/data/NEP/bfwave/bfw782a.html}, a random unit vector $b$ and the sign function. To make computations more stable, to apply the sign function to a vector we use the formula $f(A)b = (A^2)^{-1/2} (Ab)$~\cite[Chap.~5]{Higham2008}. In this case, the basis obtained from Algorithm~\ref{alg:NT} is badly conditioned and whitening is needed. The results are reported in Figure~\ref{fig:spectralProj}.

\begin{figure}
	\centering
	\includegraphics[width=0.7\textwidth]{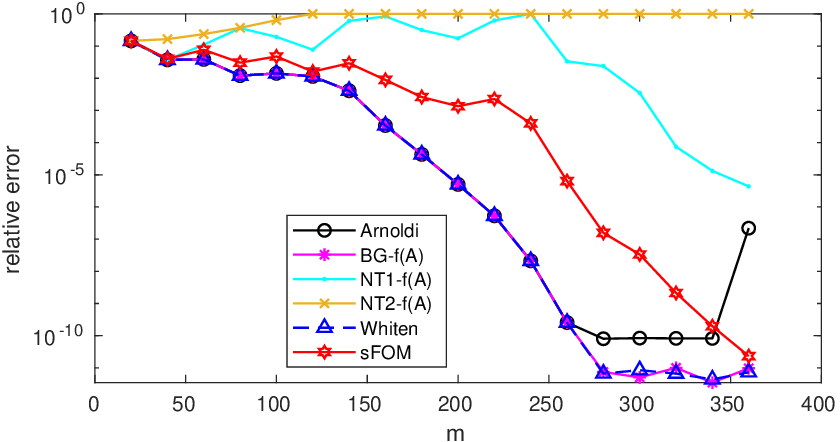}
	\caption{Sign of the matrix in Section~\ref{sec:whitening}.}
	\label{fig:spectralProj}
\end{figure}

\subsubsection{Timing of Algorithm~\ref{alg:fAb-LS-BG} vs.\ Arnoldi}\label{sec:timingBG}

We observed that Algorithm \ref{alg:fAb-LS-BG} is slower than Arnoldi for the example in Figure~\ref{fig:ConvDiff1}. As discussed in~\cite{Balabanov2020}, the benefits of Algorithm~\ref{alg:BG} compared to Arnoldi are only visible for moderately large values of $m$, the dimension of the Krylov subspace. In Algorithm~\ref{alg:fAb-LS-BG}, the construction of the basis needs to be combined with an additional step -- the solution of the least-squares problem. In Figure~\ref{fig:invsqrt3DLap} we compute the inverse square root of \rev{a matrix $A$ which is the} 3D Laplacian matrix of the form
\begin{equation*}
	T_N \otimes I_N \otimes I_N + I_N \otimes T_N \otimes I_N + I_N \otimes I_N \otimes T_N,
\end{equation*}
where $T_N = \mathrm{tridiag}(-1, 2, 1)$ and $I_N$ is the identity matrix with size $N = 80$ \rev{(therefore $n=512000$)}\rev{, perturbed in such a way that the entries on the 10th super-diagonal are $0.125$}. The vector $b$ is a randomly chosen unit vector. The sketch matrix $\Theta$ is of size $\lfloor 1.05 m\rfloor \times n$ in this example. We observe that, for large enough values of $m$, Algorithm~\ref{alg:fAb-LS-BG} is indeed faster than Arnoldi.

\begin{figure}
	\centering
	\includegraphics[width=0.9\textwidth]{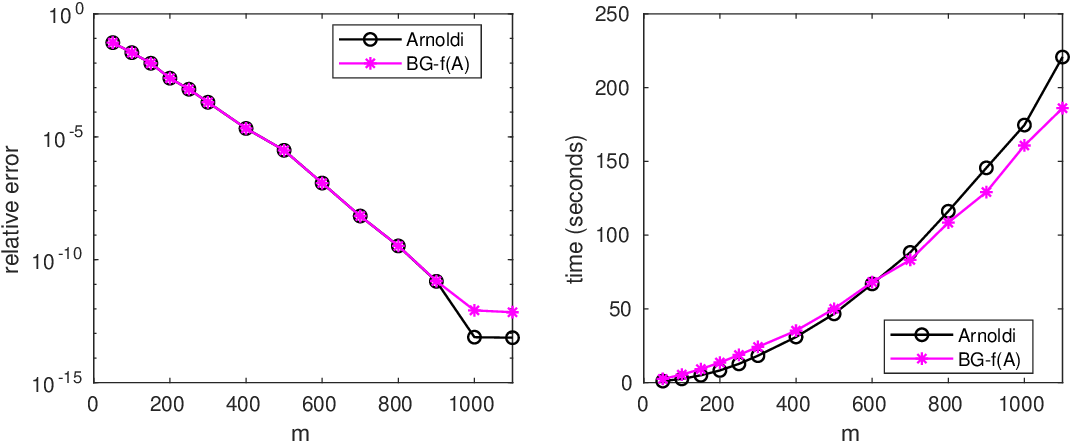}
	\caption{Computing the inverse square root of the \rev{modified} 3D Laplacian from Section~\ref{sec:timingBG}.}
	\label{fig:invsqrt3DLap}
\end{figure}

\subsection{Comparison with~\cite{Guettel2022}}\label{sec:guettel}

We also include a comparison of our algorithms with sFOM~\cite{Guettel2022} on the examples from~\cite[Sections 5.1 and 5.2]{Guettel2022}. In Figure~\ref{fig:ConvDiff2} we consider the inverse square root of a matrix coming from a convection-diffusion problem (different from the one in Figure~\ref{fig:ConvDiff1}) and in Figure~\ref{fig:wikivote} we consider the exponential of the wiki-vote matrix, of size $n = 8297$, from the SNAP collection~\url{https://sparse.tamu.edu/SNAP/wiki-Vote}. Observe that, in Figure~\ref{fig:wikivote}, Algorithm~\ref{alg:fAb-cheap} stagnates until $m=25$, even though the basis obtained from Algorithm~\ref{alg:NT} is numerically full rank, perhaps reflecting the discussion on stability in Section~\ref{sec:stability}.

\begin{figure}
	\centering
	\includegraphics[width=0.9\textwidth]{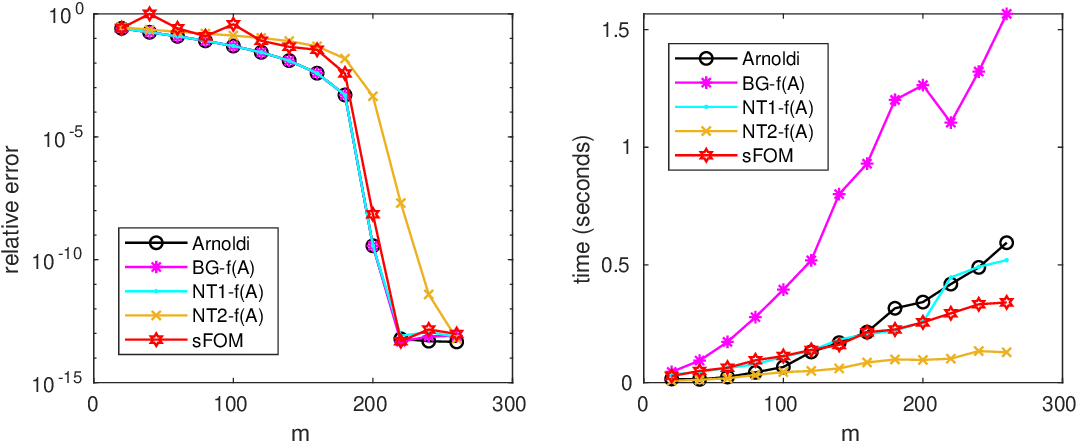}
	\caption{Convection diffusion example from Section~\ref{sec:guettel}, size $n=10^4$, $f$ is the inverse square root.
	}
	\label{fig:ConvDiff2}
\end{figure}

\begin{figure}
	\centering
	\includegraphics[width=0.7\textwidth]{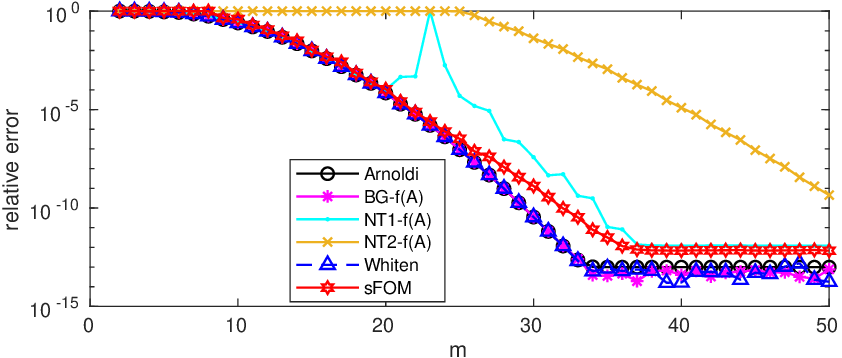}
	\caption{Exponential of the wiki-vote matrix considered in Section~\ref{sec:guettel} and in~\cite[Section 5.2]{Guettel2022}.}
	\label{fig:wikivote}
\end{figure}

\section{Conclusion}\label{sec:conclusion}

In this work we proposed two ways of speeding up Krylov subspace methods for computing $f(A)b$ for a nonsymmetric matrix $A$ and general function $f$.
For well-behaved problems, where the condition number of the constructed Krylov subspace basis does not deteriorate, it is best to run Algorithm~\ref{alg:fAb-cheap} until convergence. Once 
this condition number gets very large, our experiments indicate that it is beneficial to perform whitening and then switch to the more expensive Algorithm~\ref{alg:fAb-LS-BG}. The algorithm sFOM proposed in~\cite{Guettel2022} offers a compromise between the different methods, in being more stable (but slower) than Algorithm~\ref{alg:fAb-cheap} and faster (but less stable) than Algorithm~\ref{alg:fAb-LS-BG}.

\paragraph{Acknowledgments.} The authors thank Laura Grigori, Stefan G\"uttel, and Marcel Schweitzer for helpful discussions on this work. AC thanks the Mathematical Institute at the University of Oxford for hosting her during a research visit that initiated this work.

\bibliographystyle{siamplain}
\bibliography{Bib}
\end{document}